\def\imod#1{\allowbreak\mkern10mu({\operator@font mod}\,\,#1)}
\newtheorem{theorem}{Theorem}[section]
\newtheorem{lemma}{Lemma}[section]
\theoremstyle{definition}
\newtheorem{definition}{Definition}[section]
\newtheorem{example}{Example}[section]
\title{Anti-Ramsey Theory on Complete Bipartite Graphs}
\author{Stephan Cho\thanks{Dept.\ of Mathematics, UC Berkeley, USA {\tt stephancho@berkeley.edu}.  Supported by NSF grant no. 1262930.} \and 
Jay Cummings\thanks{Dept.\ of Mathematics, UC San Diego, USA {\tt jjcummings@ucsd.edu}.} \and 
Colin Defant\thanks{Dept.\ of Mathematics, University of Florida, USA {\tt cdefant@ufl.edu}.  Supported by NSF grant no. 1262930.} \and 
Claire Sonneborn\thanks{University of Dayton, USA {\tt csonneborn1@udayton.edu,}.  Supported by NSF grant no. 1262930.}}
\begin{document}
\maketitle

\begin{abstract}
We consider quadruples of positive integers $(a,b,m,n)$ with $a\leq b$ and $m\leq n$ such that any proper edge-coloring of the complete bipartite graph $K_{m,n}$ contains a rainbow $K_{a,b}$ subgraph. We show that any such quadruple with $m\geq a$ and $n>(a^2-a+1)(b-1)$ satisfies this property and find an infinite sequence where this bound is sharp. We also define and compute some new anti-Ramsey numbers.
\end{abstract}

\section{Introduction} 
An edge-colored graph is said to be \emph{rainbow} if no two edges have the same color. Similarly, an edge-coloring of a graph is said to be a \emph{proper} coloring if no two adjacent edges have the same color. A typical anti-Ramsey problem concerns properly edge-coloring complete graphs $K_n$ in order to forbid or guarantee the existence of certain rainbow subgraphs. However, proper edge-colorings of complete bipartite graphs have received considerably less attention. It is our goal to prove a few basic results about proper edge-colorings of complete bipartite graphs in an anti-Ramsey-theoretic setting. Namely, we will investigate quadruples of positive integers $(a,b,m,n)$ with $a\leq b$ and $m\leq n$ such that every proper edge-coloring of $K_{m,n}$ contains at least one rainbow $K_{a,b}$ subgraph.  We denote this property by $K_{m,n} \rightarrow_R K_{a,b}$.
\par 
We will make use of the following canonical correspondence between properly edge-colored complete bipartite graphs and latin rectangles. Let $G$ be a properly edge-colored copy of $K_{m,n}$. Let $A$ be the set of $m$ nonadjacent vertices  in $G$, and let $B$ be the set of $n$ nonadjacent vertices  in $G$. Then we may construct an $m\times n$ latin rectangle $R$ so that each row of $R$ corresponds to a vertex in $A$ and each column in $R$ corresponds to a vertex in $B$. Furthermore, every symbol in $R$ corresponds to the color assigned to the edge connecting the vertices that correspond to the row and column in which that symbol is placed. The fact that no two adjacent edges in $G$ have the same color corresponds to the fact that no symbol appears more than once in any row or column of $R$. We will always convene to let an $m\times n$ latin rectangle be one with $m$ rows and $n$ columns. An $x\times y$ subrectangle of a latin rectangle $R$ is the intersection of $x$ rows and $y$ columns of $R$. Finally, we will use the word ``rainbow" to describe any subrectangle whose symbols are all distinct.    
\section{General bounds}
\begin{theorem} \label{Thm2.1} 
Let $a,b,m,n$ be positive integers such that $a\leq b$, $a\leq m\leq n$, and $n>(a^2-a+1)(b-1)$. Every properly edge-colored $K_{m,n}$ contains a rainbow $K_{a,b}$ subgraph. That is, $K_{m,n}\rightarrow_R K_{a,b}$.  
\end{theorem}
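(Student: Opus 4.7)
The plan is to translate everything to the latin rectangle language and find the rainbow $a \times b$ subrectangle by a conflict-graph argument. Let $R$ be any $m \times n$ latin rectangle; I need $a$ rows and $b$ columns whose $a \times b$ intersection has all $ab$ entries distinct. Since $m \ge a$, first fix any $a$ rows and consider the $a \times n$ sub-latin-rectangle $R'$ they determine. Note that each column of $R'$ already has $a$ pairwise distinct entries, so the only obstruction to the chosen columns forming a rainbow subrectangle together with these $a$ rows is that two columns might share a common symbol in $R'$.

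This suggests defining an auxiliary ``conflict graph'' $H$ on the $n$ columns of $R'$, where two columns are adjacent whenever they share at least one symbol within the chosen $a$ rows. Any independent set of size $b$ in $H$ then furnishes the required $b$ columns, so the task reduces to finding a large independent set in $H$.

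The key step is to bound $\Delta(H)$. A fixed column of $R'$ contains exactly $a$ symbols, and each symbol appears in at most $a$ columns of $R'$ in total (at most once per row). Thus each of the $a$ symbols in our fixed column can be shared with at most $a-1$ other columns, giving $\Delta(H) \le a(a-1)=a^2-a$. A greedy argument (equivalently, a greedy proper coloring of $H$ with $\Delta(H)+1$ colors and then pigeonhole on color classes) produces an independent set of size at least $\lceil n/(a^2-a+1)\rceil$. The hypothesis $n > (a^2-a+1)(b-1)$ gives $n \ge (a^2-a+1)(b-1)+1$, whence $\lceil n/(a^2-a+1)\rceil \ge b$. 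Selecting $b$ columns from such an independent set yields the desired rainbow $a \times b$ subrectangle, which translates back to a rainbow $K_{a,b}$ in the original coloring of $K_{m,n}$.

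The conceptual content is really concentrated in the degree bound $a^2-a$, which is exactly what forces the threshold $(a^2-a+1)(b-1)$; the rest is just greedy independent-set extraction. I do not expect a genuine obstacle, since the bound matches the stated sharpness of the theorem, suggesting that both the counting of $\Delta(H)$ and the pigeonhole step are tight and that no more sophisticated tool (e.g.\ Brooks' theorem or a probabilistic deletion argument) is needed.
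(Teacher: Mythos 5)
Your proof is correct and is essentially the paper's argument in different clothing: both fix $a$ rows (reducing to an $a\times n$ latin rectangle) and exploit the same count that each symbol of a column occurs in at most $a-1$ other columns, then greedily extract $b$ pairwise symbol-disjoint columns. The paper grows the rainbow subrectangle column by column (excluding at most $at(a-1)$ conflicting columns at step $t$), while you package the identical counting as the bound $\Delta(H)\le a^2-a$ for the conflict graph together with $\alpha(H)\ge\lceil n/(\Delta(H)+1)\rceil$; the two computations coincide, so no substantive difference or gap.
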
 
\begin{proof} 
It suffices to show that any properly edge-colored $K_{a,n}$ contains a rainbow $K_{a,b}$ subgraph. Suppose we have a proper edge-coloring of $K_{a,n}$, and let $R$ be the corresponding $a\times n$ latin rectangle. Because $R$ is latin, we may choose any column of $R$ to obtain a rainbow $a\times 1$ subrectangle of $R$. Now, suppose that we have managed to find a rainbow $a\times t$ subrectangle of $R$ for some $t\in\{1,2,\ldots,b-1\}$, and call this subrectangle $T$. Each of the $at$ distinct symbols in $T$ may appear at most $a-1$ times outside of $T$ because it can appear no more than once in each row of $R$. Furthermore, because there are $n-t$ columns of $R$ outside of $T$ and $n-t>(a^2-a+1)(b-1)-t\geq(a^2-a+1)t-t=at(a-1)$, we see that there is some column of $R$ containing none of the $at$ symbols that appear in $T$. We may annex this additional column to $T$ to form a rainbow $a\times (t+1)$ subrectangle of $R$. By induction, we see that we may construct a rainbow $a\times b$ subrectangle of $R$, so the proof is complete.  
\end{proof} 
It is natural to ask how good the bound $n>(a^2-a+1)(b-1)$ used in Theorem \ref{Thm2.1} is. In other words, if $n\leq (a^2-a+1)(b-1)$, can we always find a proper edge-coloring of $K_{m,n}$ that forbids the appearance of any rainbow $K_{a,b}$ subgraphs? Theorem \ref{Thm2.2} makes progress toward answering this question. First, we need a couple of preliminary results.  

\begin{definition}
Let $\mathcal{P}$ be the set of all possible orders of a finite projective plane (we convene to let $1\in\mathcal P$).
\end{definition}

We will make use of a theorem of Singler \cite{Proj} from 1938, which we state now. 

\begin{theorem}\label{Sing} (Singler)
Let $q$ be a power of a prime\footnote{In this paper, we consider $1$ to be a prime power.}. The automorphism group of $PG(2,q)$ contains a cyclic subgroup $\langle \sigma \rangle$ which acts regularly on points and regularly on lines.
\end{theorem}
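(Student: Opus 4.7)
The plan is to realize the Singer cycle concretely via the identification of $\mathbb{F}_{q^3}$ with a three-dimensional $\mathbb{F}_q$-vector space. Under this identification, the one-dimensional $\mathbb{F}_q$-subspaces of $\mathbb{F}_{q^3}$ are exactly the orbits of $\mathbb{F}_q^*$ acting on $\mathbb{F}_{q^3}^*$ by multiplication, so the point set of $PG(2,q)$ is naturally in bijection with the quotient group $\mathbb{F}_{q^3}^*/\mathbb{F}_q^*$, which is cyclic of order $(q^3-1)/(q-1) = q^2+q+1$. The lines of $PG(2,q)$ correspond, in this picture, to the two-dimensional $\mathbb{F}_q$-subspaces of $\mathbb{F}_{q^3}$, and there are also $q^2+q+1$ of these.

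To construct $\sigma$, I would let $\alpha$ be a generator of the cyclic group $\mathbb{F}_{q^3}^*$ and take $\sigma$ to be the projectivity induced by the $\mathbb{F}_q$-linear map ``multiplication by $\alpha$'' on $\mathbb{F}_{q^3}$. The image of $\alpha$ generates $\mathbb{F}_{q^3}^*/\mathbb{F}_q^*$, so on the point set $\sigma$ acts as translation by a generator in a cyclic group of order $q^2+q+1$; this action is manifestly regular, and $\sigma$ has order exactly $q^2+q+1$ in $\mathrm{Aut}(PG(2,q))$. For the action on lines, I would prove freeness and then invoke cardinalities. Suppose $\sigma^k$, with $0 < k < q^2+q+1$, fixed a line, i.e., a two-dimensional $\mathbb{F}_q$-subspace $W \subseteq \mathbb{F}_{q^3}$. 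Then $W$ is an $\mathbb{F}_q[\alpha^k]$-submodule; but $\mathbb{F}_q[\alpha^k]$ is a subfield of $\mathbb{F}_{q^3}$, hence equals either $\mathbb{F}_q$ or $\mathbb{F}_{q^3}$. The latter is impossible, as $W$ would then be $0$ or all of $\mathbb{F}_{q^3}$; the former forces $\alpha^k \in \mathbb{F}_q^*$, whence $(q^2+q+1)\mid k$, a contradiction. So $\langle\sigma\rangle$ acts freely on the set of lines, and since $|\langle\sigma\rangle|$ equals the number of lines, the action is also transitive, hence regular.

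The main delicacy I anticipate is precisely the freeness check on lines: matching cardinalities alone does not preclude the existence of short orbits, and it is the intermediate-field structure of $\mathbb{F}_{q^3}/\mathbb{F}_q$ that rules them out. An alternative, which I would mention for robustness, is to appeal to projective duality, running the entire construction on the dual plane to produce a regular cyclic action on lines that can then be identified with $\langle\sigma\rangle$. The convention that $1 \in \mathcal{P}$ is effectively vacuous for this construction, and can be handled separately by declaring $PG(2,1)$ to be a triangle with three points and three lines, on which any $3$-cycle gives the desired regular cyclic action.
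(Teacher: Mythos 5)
Correct. Note that the paper itself offers no proof of this statement: it is quoted as a known theorem of Singer (1938) and used as a black box in Lemma \ref{prop2}, so your argument is a self-contained substitute for the citation rather than an alternative to an in-paper proof. Your construction is the standard Singer-cycle argument and it is sound: identifying the points of $PG(2,q)$ with $\mathbb{F}_{q^3}^*/\mathbb{F}_q^*$ makes regularity on points immediate, and the one genuinely delicate step --- ruling out short orbits on lines --- is handled correctly: if $\alpha^k W = W$ for a $2$-dimensional $\mathbb{F}_q$-subspace $W$, then $W$ is a module over the subfield $\mathbb{F}_q[\alpha^k]$, and since $[\mathbb{F}_{q^3}:\mathbb{F}_q]=3$ is prime this subfield is $\mathbb{F}_q$ (forcing $\alpha^k\in\mathbb{F}_q^*$, i.e.\ $(q^2+q+1)\mid k$) or $\mathbb{F}_{q^3}$ (impossible for a $2$-dimensional $W$); combined with the count of $q^2+q+1$ lines, freeness upgrades to regularity. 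Your separate handling of the degenerate convention $q=1$ (the triangle as $PG(2,1)$) matches the spirit of the paper's footnote, which otherwise leaves that case implicit.
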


Singler's result allows us to prove the following important lemma. 

\begin{lemma}\label{prop2}
Let $a$ be an integer. If there exists an $a \times (a^2 - a + 1)$ latin rectangle without any rainbow $a \times 2$ subrectangles, then $a-1\in \mathcal P$. If $a-1$ is a prime power, then there exists an $a \times (a^2 - a + 1)$ latin rectangle without any rainbow $a \times 2$ subrectangles.
\end{lemma}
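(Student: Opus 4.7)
The plan is to identify the columns of the latin rectangle with the lines of an incidence structure on the symbols, and to show via two rounds of double counting that the intersection constraints force this to be a projective plane of order $a-1$. For the converse, I will use Singer's theorem to realize the rectangle as the orbit of a single column under a cyclic action on a projective plane.

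For the forward direction, suppose $R$ is an $a \times (a^2-a+1)$ latin rectangle with no rainbow $a \times 2$ subrectangle. For each column $C$, let $B(C)$ denote the set of symbols in $C$; then $|B(C)| = a$, and the no-rainbow hypothesis is equivalent to $B(C) \cap B(C') \neq \emptyset$ for all $C \neq C'$. Write $v = a^2-a+1$, and for each symbol $s$ let $r_s$ denote the number of columns containing $s$; since $s$ can appear at most once in each of the $a$ rows, $r_s \le a$. Double counting yields $\sum_s r_s = av$ and $\sum_s \binom{r_s}{2} \ge \binom{v}{2}$, and the second inequality, together with the identity $v + a - 1 = a^2$, rearranges to $\sum_s r_s^2 \ge va^2$. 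On the other hand, the cap $r_s \le a$ gives $\sum_s r_s^2 \le a \sum_s r_s = va^2$. Equality throughout forces $r_s \in \{0, a\}$ for every $s$, so the number of used symbols is exactly $v$, and every pair of columns meets in exactly one symbol. A second count of ordered pairs of distinct symbols lying together in a common column yields $v \cdot a(a-1) = v(v-1)$, matching the total number of ordered pairs of used symbols; since any pair lying in two columns would make those two columns share two symbols (contradicting the one-symbol intersection just established), every pair of symbols lies in exactly one column. The resulting incidence structure has $v$ points and $v$ lines, each line of size $a$, each point on $a$ lines, any two lines meeting in exactly one point, and any two points lying on exactly one line, so it is a projective plane of order $a-1$; hence $a-1 \in \mathcal{P}$.

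For the converse, suppose $q = a - 1$ is a prime power, so $PG(2,q)$ has $v = q^2+q+1 = a^2-a+1$ points and lines, with each line containing $a$ points. By Theorem \ref{Sing}, there is a cyclic automorphism $\sigma$ acting regularly on points and on lines; identify the points with $\mathbb{Z}/v\mathbb{Z}$ so that $\sigma$ acts as addition by $1$. Fix one line $L_0 = \{c_1, \ldots, c_a\}$; then the remaining lines are exactly its translates $L_j = \{c_1+j, \ldots, c_a+j\}$ for $j \in \mathbb{Z}/v\mathbb{Z}$. Define the $a \times v$ array $R$ by $R[i][j] = c_i + j \pmod{v}$. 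Row $i$ is the bijection $j \mapsto c_i + j$, so it lists every residue exactly once, and column $j$ is precisely $L_j$ (with distinct entries, since the $c_i$ are distinct), so $R$ is a latin rectangle. For distinct columns $j$ and $j'$, the shared symbols correspond to $L_j \cap L_{j'}$, which is nonempty since any two lines in a projective plane meet; hence $R$ contains no rainbow $a \times 2$ subrectangle.

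The main obstacle is the tight double squeeze in the forward direction: the lower bound $\sum_s r_s^2 \ge va^2$ coming from the pairwise-intersection hypothesis and the upper bound $\sum_s r_s^2 \le va^2$ coming from $r_s \le a$ are forced to coincide, and only from this coincidence do the projective plane axioms emerge. The dual axiom that any two symbols share a column also requires care, as it relies on the one-symbol intersection conclusion already in hand to rule out double-containment. The converse direction is relatively routine bookkeeping once Singer's theorem is invoked.
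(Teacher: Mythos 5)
Your proof is correct and follows essentially the same route as the paper: the forward direction treats columns as lines and symbols as points and uses a counting squeeze to force a projective plane of order $a-1$, and the converse realizes the rectangle as the orbit of a single line under the Singer cycle. The only differences are cosmetic — you run the count globally (the squeeze $va^2\le\sum_s r_s^2\le va^2$) where the paper averages per column, and you verify the ``two points lie on a unique line'' axiom explicitly, which the paper leaves implicit.
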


\begin{proof}
Suppose $R$ is an $a \times (a^2 - a + 1)$ latin rectangle without a rainbow $a \times 2$ subrectangle. Note that any two columns of $R$ have a common element.  Given any column $C$ of $R$, each of the other $a(a-1)$ columns must contain one of the $a$ elements in $C$.  Thus each of the elements of $C$ is, on average, in at least $a-1$ other columns.

Now, assume that one of these elements is in strictly fewer than $a-1$ other columns.  Then another element would have to be in at least $a$ other columns.  Clearly this is impossible -- along with $C$ such an element is in at least $a+1$ columns, implying that two must be in a common row.
\par
Thus every element is in exactly $a$ different columns and every column has exactly $a$ different elements.  Furthermore it is easy to see by the same reasoning that no two columns can intersect in more than one element.  Thus the columns of $R$ satisfy the axioms of a projective plane of order $a-1$.

We now assume that $a-1$ is a prime power and consider a projective plane $P$ of order $a-1$. We aim to construct a latin rectangle of the asserted size.

Take any line $B$ from $P$ and use its elements to form the first edge of our latin rectangle; the orders of the elements are not important.  By Theorem \ref{Sing} there exists a cyclic subgroup $\langle \sigma \rangle$ which acts regularly on the points and regularly on lines.  For each $i$, apply $\sigma$ to the $i^{\text{th}}$ member of the first column to obtain the $i^{\text{th}}$ member of the second column.  In general, obtain each successive column by applying $\sigma$ to the previous.

Recall that a projective plane of order $a-1$ has $(a-1)^2 + (a-1) + 1 = a^2 - a + 1$ points, so $\sigma$ is simply a cyclic ordering of these.  Thus, the $i^{\text{th}}$ row is simply a list of all $a^2 - a + 1$ elements which follows this ordering and starts with the $i^{\text{th}}$ member of the first column. Given this, it's clear that each row has distinct entries.  Furthermore, since $B$ has distinct members it is also clear that each successive column has distinct entries.  Indeed, if two entries were the same then that implies that the two rows are completely the same, as each successive entry is obtained by repeatedly applying $\sigma$.  In particular their two entries in $B$ are the same.  But this is a contradiction as $B$ was a single line in $P$ and hence has distinct members.  This completes the proof.
\end{proof}

\begin{example}
For the seven-point plane, consider $\sigma = (1,3,5,7,2,4,6)$ and $B = \{1,2,4\}$.  Then a corresponding latin rectangle is

\begin{center}
\begin{tabular}{ c c c c c c c }
  1 & 3 & 5 & 7 & 2 & 4 & 6\\
  2 & 4 & 6 & 1 & 3 & 5 & 7 \\
  4 & 6 & 1 & 3 & 5 & 7 & 2\\
\end{tabular}
\end{center}
\end{example}

\begin{theorem} \label{Thm2.2}
Let $a,b,m,n$ be positive integers satisfying $a\leq b$, $m\leq n$, $m<b$, and $n\leq(a^2-a+1)(b-1)$. If $a-1$ is a prime power, then it is possible to properly edge-color $K_{m,n}$ to forbid the existence of any rainbow $K_{a,b}$ subgraph. That is, if $a-1$ is a prime power, then $K_{m,n}\not\rightarrow_R K_{a,b}$. 
\end{theorem}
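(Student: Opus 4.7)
The plan is to construct an $m\times n$ latin rectangle with no rainbow $a\times b$ subrectangle (equivalent to the desired proper edge-coloring of $K_{m,n}$). The building block is the $a\times(a^2-a+1)$ latin rectangle $L$ from Lemma \ref{prop2}, in which every pair of columns contains a repeated symbol.

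The main construction is to place $b-1$ copies of $L$ side by side, each copy using its own pairwise disjoint symbol set, yielding an $a\times(a^2-a+1)(b-1)$ latin rectangle that I will call $L^{\ast}$. I then truncate $L^{\ast}$ to its first $n$ columns, which is legal because $n\le(a^2-a+1)(b-1)$.

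The resulting $a\times n$ latin rectangle contains no rainbow $a\times b$ subrectangle by a short pigeonhole argument: among any $b$ chosen columns, there are only $b-1$ copies of $L$ to distribute them across, so two columns must come from the same copy; by Lemma \ref{prop2} these two columns share a color, and so the $a\times b$ subrectangle they span (together with all $a$ rows) is not rainbow. This immediately handles the cases $m\le a$: when $m<a$, $K_{m,n}$ contains no $K_{a,b}$ subgraph at all, so every proper edge-coloring works; when $m=a$, the truncation of $L^{\ast}$ is the desired coloring.

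The remaining case $a<m<b$ is where I expect the main difficulty. The goal would be to enlarge each copy of $L$ into an $m\times(a^2-a+1)$ block in such a way that the ``no rainbow $a\times 2$'' property survives for \emph{every} $a$-row subset of the enlarged block; the pigeonhole argument above would then go through verbatim, because whenever two chosen columns land in the same copy, any $a$-row restriction of that pair still has a repeated color. My plan is to exploit the cyclic structure from Singler's theorem in the same spirit as Lemma \ref{prop2}: represent each row by a point of the projective plane of order $a-1$ and take its $\sigma$-orbit as the row entries, then try to choose $m$ base points so that every $a$-subset of them behaves like a line, i.e., has nonempty intersection with each of its $\sigma$-translates. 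This delicate choice of base points is the crucial step; once it is in place, the rest of the argument is the same pigeonhole as in the $m=a$ case.
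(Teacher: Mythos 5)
Your construction for the cases $m\le a$ is correct and coincides with the paper's: concatenate $b-1$ symbol-disjoint copies of the rectangle from Lemma~\ref{prop2}, truncate to $n$ columns, and pigeonhole the $b$ columns of any candidate $a\times b$ subrectangle into the $b-1$ copies. (You should also remark, as the paper does, that $m<b$ makes the $b\times a$ orientation vacuous.) However, the case $a<m<b$, which you explicitly leave open, is a genuine gap: the proposal does not prove the theorem, since e.g.\ $a=2$, $b=4$, $m=3$ already falls in this range.

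That said, you have isolated exactly the right difficulty. After pigeonholing two of the $b$ columns into one block, the candidate $a\times b$ subrectangle meets that block in an $a\times 2$ subrectangle whose rows are an \emph{arbitrary} $a$-subset of the $m$ rows, so what is really needed is that every $a$-row restriction of every pair of columns of the block repeats a symbol. The paper disposes of this case in one sentence: it extends each $a\times(a^2-a+1)$ block to an $m\times(a^2-a+1)$ latin rectangle and verifies only that any two \emph{full} columns share a symbol (``no rainbow $m\times 2$ subrectangle''), which is the weaker property---the common symbol guaranteed in the original $a$ rows need not lie in the $a$ rows that the subrectangle actually selects. So your instinct that something nontrivial remains to be done here is sound. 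Be warned, though, that the specific repair you sketch appears to be a dead end: identifying the points with $\mathbb{Z}_{a^2-a+1}$ via the Singer cycle, an $a$-subset meets all of its $\sigma$-translates only if its $a(a-1)$ ordered differences exhaust the $a^2-a$ nonzero residues, i.e.\ only if it is a planar difference set realizing each difference exactly once; but if $m>a$, some difference $k$ is realized by two distinct pairs of base points, and (at least for $a\ge 4$, where those at most four points fit inside a single $a$-subset; the cases $a=2,3$ fail by similar direct checks) some $a$-subset then realizes $k$ twice. Closing the case $a<m<b$ therefore requires either a different way of extending the blocks or a different construction altogether.
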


\begin{proof} 
Suppose $a-1$ is a prime power. By Lemma \ref{prop2}, there exists an $a\times (a^2-a+1)$ latin rectangle that contains no rainbow $a\times 2$ subrectangle. We prove that there exists an $m\times n$ latin rectangle containing no rainbow $a\times b$ or $b\times a$ subrectangle. Because $m<b$, there are no $b\times a$ subrectangles of any $m\times n$ latin rectangle, so it suffices to construct an $m\times n$ latin rectangle with no rainbow $a\times b$ subrectangles. Furthermore, it is easy to see that it suffices to construct such a rectangle for the case in which $m=b-1$ and $n=(a^2-a+1)(b-1)$. 
\par 
Let $m=b-1$ and $n=(a^2-a+1)(b-1)$. Let $L$ be an $m\times n$ latin rectangle. We first partition $L$ into $m$ subrectangles $A_1,A_2,\ldots,A_m$, each of size $m\times (a^2-a+1)$. By the Pigeonhole Principle, any $a\times b$ subrectangle of $L$ must contain at least two columns from $A_k$ for some $k\in\{1,2,\ldots,m\}$. In other words, any $a\times b$ subrectangle of $L$ contains an $m\times 2$ subrectangle of $A_k$ for some $k\in\{1,2,\ldots,m\}$. We will fill $L$ with symbols in such a manner so as to ensure that, for any $k\in\{1,2,\ldots,m\}$, there is no rainbow $m\times 2$ subrectangle of $A_k$, which will then imply the desired result. We may fill $L$ with symbols so that, for any distinct $j,k\in\{1,2,\ldots,m\}$, no symbol appears in both $A_j$ and $A_k$. This will ensure that the choice of symbols in $A_j$ does not affect where we may choose to place symbols in $A_k$ and vice versa. Therefore, it suffices to show that we may fill an $m\times (a^2-a+1)$ latin rectangle with symbols so that any two columns have a symbol in common. To do so, we simply extend the $a\times (a^2-a+1)$ latin rectangle that we assumed exists to an $m\times (a^2-a+1)$ latin rectangle.
\end{proof} 

We close with a final result on how to generate a sequence of latin rectangles to avoid larger and larger rainbow subrectangles. 

\begin{theorem}
Let $m,n,a,b,r,s$ be positive integers. Let $J$ be the $m \times n$ all-1s matrix, and let $A$ be an $m \times n$ latin rectangle consisting of positive integers whose maximum entry is $t$. Let $B$ be an $r\times s$ latin rectangle whose entries are nonnegative integers. Let
\[A'= J\otimes A+tB\otimes J=\left[ \begin{array}{cccc}
A+tB_{11}J & A+tB_{12}J & \cdots & A+tB_{1r}J \vspace{0.2cm}\\
A+tB_{21}J & A+tB_{22}J & \cdots & A+tB_{2r}J \\
\vdots & \vdots & \ddots & \vdots \\
A+tB_{r1}J & A+tB_{r2}J & \cdots & A+tB_{rr}J
\end{array} \right].\] 
Then $A'$ is an $rm\times sn$ latin rectangle. If $A$ has no rainbow $a\times b$ subrectangles, then $A'$ has no rainbow $r(a-1)+1\times s(b-1)+1$ subrectangles. In particular, if $K_{m,n} \not\rightarrow_R K_{a,b}$ then $K_{rm,rn} \not\rightarrow_R K_{r(a-1)+1,r(b-1)+1}$.
\end{theorem}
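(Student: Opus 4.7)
My plan is to verify the three claims in turn, the crux being a short pigeonhole argument. For the latin property of $A'$, I would first note that because the entries of $A$ lie in $\{1,2,\ldots,t\}$, any value $A_{ij}+tB_{pq}$ uniquely recovers the pair $(A_{ij},B_{pq})$: the representative in $\{1,\ldots,t\}$ of its residue class mod $t$ gives $A_{ij}$, and the remaining quotient gives $B_{pq}$. Hence two entries of $A'$ coincide iff the underlying entries of $A$ and of $B$ both coincide. Within the $i$th row of block row $p$, whose entries are $A_{ij}+tB_{pq}$ as $(j,q)$ varies, an equality therefore forces $j=j'$ (by the latin property of row $i$ of $A$) and $q=q'$ (by the latin property of row $p$ of $B$); columns are symmetric.

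For the second claim, suppose we select $r(a-1)+1$ rows and $s(b-1)+1$ columns of $A'$ to form a subrectangle $S$. Partition the chosen rows according to which of the $r$ block rows they lie in; since $r(a-1)+1>r(a-1)$, pigeonhole forces some block row $p^{*}$ to contain at least $a$ chosen rows, and similarly some block column $q^{*}$ to contain at least $b$ chosen columns. These $a$ rows and $b$ columns determine an $a\times b$ subrectangle of $S$ lying entirely inside the $(p^{*},q^{*})$ block of $A'$, which equals $A+tB_{p^{*}q^{*}}J$. Shifting all entries by the constant $tB_{p^{*}q^{*}}$ preserves the pattern of equalities, so this subrectangle of $A'$ is rainbow iff the corresponding $a\times b$ subrectangle of $A$ is; since by hypothesis the latter is not, $S$ contains a repeated entry.

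The anti-Ramsey consequence then follows by specialization: a witness to $K_{m,n}\not\rightarrow_R K_{a,b}$ corresponds, via the canonical bijection of Section~1, to an $m\times n$ latin rectangle $A$ with no rainbow $a\times b$ subrectangle. Taking $B$ to be any $r\times r$ latin square with entries in $\{0,1,\ldots,r-1\}$ (for instance the Cayley table of $\mathbb{Z}/r\mathbb{Z}$), the construction produces an $rm\times rn$ latin rectangle $A'$ with no rainbow $(r(a-1)+1)\times(r(b-1)+1)$ subrectangle, which is exactly the desired coloring of $K_{rm,rn}$. I do not foresee a serious obstacle; the only care needed is the residue-and-quotient bookkeeping in the first step (which relies on the stated conventions that $A$'s entries begin at $1$ and $B$'s at $0$) and choosing precisely the dimensions $r(a-1)+1$ and $s(b-1)+1$ in the second step, so that pigeonhole delivers the required $a$ rows and $b$ columns inside a single block.
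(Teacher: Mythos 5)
Your first two steps are correct and follow essentially the paper's own route: the residue-and-quotient bookkeeping is just a more explicit version of the paper's observation that two blocks $A+tB_{ij}J$ and $A+tB_{k\ell}J$ share no entries unless $B_{ij}=B_{k\ell}$, and your pigeonhole argument locating $a$ chosen rows in one block row and $b$ chosen columns in one block column, then using that $A+tB_{p^*q^*}J$ is a constant shift of $A$, is exactly the paper's argument for the second claim.

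There is, however, a genuine gap in your final specialization. A rainbow $K_{r(a-1)+1,\,r(b-1)+1}$ subgraph of $K_{rm,rn}$ need not correspond to a rainbow $\left(r(a-1)+1\right)\times\left(r(b-1)+1\right)$ subrectangle of $A'$: the side of size $r(b-1)+1$ may lie in the part of size $rm$, in which case the subgraph corresponds to a rainbow $\left(r(b-1)+1\right)\times\left(r(a-1)+1\right)$ subrectangle instead, and this transposed orientation is possible whenever $r(b-1)+1\leq rm$. So producing an $A'$ with no rainbow $\left(r(a-1)+1\right)\times\left(r(b-1)+1\right)$ subrectangle is not yet ``exactly the desired coloring.'' The paper closes this by noting that the hypothesis $K_{m,n}\not\rightarrow_R K_{a,b}$ actually yields an $A$ with no rainbow $a\times b$ \emph{and} no rainbow $b\times a$ subrectangles, and then, since you have set $r=s$, applying the block result a second time with the roles of $a$ and $b$ interchanged to conclude that $A'$ also has no rainbow $\left(r(b-1)+1\right)\times\left(r(a-1)+1\right)$ subrectangles. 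Your argument needs this additional step (which is available with the tools you already have) before the conclusion $K_{rm,rn}\not\rightarrow_R K_{r(a-1)+1,r(b-1)+1}$ follows.
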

\begin{proof}
We defined $A'$ to be a block matrix with $rs$ blocks, each of the form $A+tB_{ij}J$. Because $t$ is the largest entry in $A$ and the entries of $A$ and $B$ are nonnegative integers, two blocks $A+tB_{ij}J$ and $A+tB_{k\ell}J$ cannot have any common entries unless $B_{ij}=B_{k\ell}$. Therefore, since $A$ and $B$ are latin, it is easy to see that $A'$ must also be latin. Let $T$ be an $r(a-1)+1\times s(b-1)+1$ subrectangle of $A'$. By the pigeonhole principle, there must be $a$ rows of $T$ and $b$ columns of $T$ which intersect in a single block of $A'$, say $A+tB_{uv}J$. The intersection of these rows and columns forms an $a\times b$ subrectangle $R$ of $A+tB_{uv}J$. Since $A$ contains no rainbow $a\times b$ subrectangles, $A+tB_{uv}J$ cannot contain a rainbow $a\times b$ subrectangle. This implies that $R$ is not rainbow. Since $R$ is a subrectangle of $T$, $T$ cannot be rainbow. As $T$ was arbitrary, this shows that $A'$ contains no rainbow $r(a-1)+1\times s(b-1)+1$ subrectangles.
\par 
Now, suppose $K_{m,n}\not\rightarrow_R K_{a,b}$. Then we may let $A$ be an $m\times n$ latin rectangle that contains no rainbow $a\times b$ subrectangles \emph{and} contains no rainbow $b\times a$ subrectangles. Setting $r=s$ in the first part of the theorem, we obtain an $rm\times rn$ latin rectangle $A'$ that contains no rainbow $r(a-1)+1\times r(b-1)+1$ subrectangles. However, since $A$ also has no rainbow $b\times a$ subrectangles, we may interchange the roles of $a$ and $b$ in the first part of the theorem to see that $A'$ also has no rainbow $r(b-1)+1\times r(a-1)+1$ subrectangles. Hence, $K_{rm,rn}\not\rightarrow_R K_{r(a-1)+1,r(b-1)+1}$.     
\end{proof}

\section{Anti-Ramsey Numbers}
Returning to Theorem \ref{Thm2.2},
we find it particularly interesting to consider the case $a=2$, $n=2m=2b$. That is, we wish to find positive integers $m$ such that any proper edge-coloring of $K_{m,2m}$ contains a rainbow $K_{2,m}$ subgraph. Theorem \ref{Thm2.1} shows that any proper edge-coloring of $K_{2,4}$ contains a rainbow $K_{2,2}$ subgraph, and the following theorem  deals with the case $m=3$. 
\begin{theorem} \label{Thm2.3}
Any properly edge-colored $K_{3,6}$ contains a rainbow $K_{2,3}$ subgraph.  
\end{theorem}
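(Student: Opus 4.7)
The plan is to use the latin-rectangle correspondence and show that every $3\times 6$ latin rectangle $R$ contains a rainbow $2\times 3$ or $3\times 2$ subrectangle; I will assume for contradiction that $R$ admits neither.

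The first step will be to force all three rows of $R$ to share the same $6$-element alphabet. For each pair of rows $i\neq j$, I define the coincidence graph $G_{ij}$ on the six columns by joining $c$ and $c'$ whenever some symbol appears in row $i$ of one column and in row $j$ of the other. The absence of a rainbow $2\times 3$ on rows $i,j$ means $G_{ij}$ has no independent set of size $3$, so its complement is triangle-free and Mantel's theorem gives $|E(G_{ij})|\ge \binom{6}{2}-9=6$. Since each edge is witnessed by a distinct common symbol and each row uses only six symbols, rows $i$ and $j$ must share all six symbols; applying this to the three row-pairs makes every symbol appear in every row. Relabeling columns so that row $1$ is the identity on $\{1,\dots,6\}$, rows $2$ and $3$ become permutations $\pi,\sigma$ satisfying $\pi(c)\neq c$, $\sigma(c)\neq c$, and $\sigma(c)\neq\pi(c)$ for each column $c$.

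Next I will determine the cycle types of $\pi$ and $\sigma$. A rainbow $2\times 3$ on rows $1,2$ corresponds to a $3$-subset $S$ of columns with $\pi(S)\cap S=\emptyset$. Running through the four derangement cycle types in $S_6$---namely $(6)$, $(4,2)$, $(3,3)$, and $(2,2,2)$---one checks that under an appropriate labeling the set $\{1,3,5\}$ achieves the disjointness in every type except $(3,3)$; and for $\pi=(a_1a_2a_3)(b_1b_2b_3)$ a pigeonhole argument on how $S$ distributes between the two $3$-cycles shows $S\cap\pi(S)\neq\emptyset$ for every $S$. The same reasoning applied to the other two row-pairs forces $\sigma$ and $\sigma\pi^{-1}$ also to be of cycle type $(3,3)$.

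The main obstacle will be the final step: deducing that the column contents $V(c)=\{c,\pi(c),\sigma(c)\}$ must include a disjoint pair, which yields a rainbow $3\times 2$. Normalizing $\pi=(123)(456)$, I split on whether $\sigma$ preserves the partition $\{\{1,2,3\},\{4,5,6\}\}$. If it does, the constraint $\sigma(c)\neq\pi(c)$ combined with the $(3,3)$ structure forces $\sigma=\pi^{-1}$, and then $V(c)=\{1,2,3\}$ for $c\le 3$ while $V(c)=\{4,5,6\}$ for $c\ge 4$, giving the disjoint pair. If $\sigma$ mixes the partition, I will use the centralizer $\langle(123),(456),(14)(25)(36)\rangle$ of $\pi$, which acts transitively on the $9$ alternative partitions, to reduce to the representative $\sigma=(142)(365)$ (a short check shows this is the only valid $\sigma$ on its partition); then $V(1)=\{1,2,4\}$ and $V(5)=\{3,5,6\}$ are disjoint, and by centralizer symmetry every configuration in this branch produces such a pair. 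Either way we obtain a rainbow $3\times 2$, contradicting the assumption.
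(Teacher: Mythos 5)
Your proposal is correct, and it takes a genuinely different route from the paper. The paper's proof is a hands-on normalization-and-case-analysis: it fixes a ``primal'' coloring (first row $1,\dots,6$), uses column swaps and color exchanges to force specific entries of the second row ($R_{22}=1$, $R_{24}=3$, $R_{21}=6$, $R_{23}=5$, $R_{25}=4$), and then locates one column containing $\{3,4,5\}$ and another containing $\{1,2,6\}$, whose union is a rainbow $3\times 2$. You instead extract structure first: the coincidence-graph/Mantel count correctly forces each pair of rows to share all six symbols (each common symbol witnesses exactly one edge, so $\ge 6$ edges gives $\ge 6$ shared symbols), so rows $2$ and $3$ become derangements $\pi,\sigma$ with $\sigma(c)\neq\pi(c)$; the cycle-type lemma (only type $(3,3)$ meets every $3$-set of columns, by the two-subsets-of-a-triple pigeonhole, while $\{1,3,5\}$ works for $(6)$, $(4,2)$, $(2,2,2)$, and the property is conjugation-invariant) is right; and the endgame is sound, since a $(3,3)$ permutation preserving $\pi$'s block partition must fix each block, forcing $\sigma=\pi^{-1}$ and disjoint column contents $\{1,2,3\}$, $\{4,5,6\}$, while in the mixed case $\langle(123),(456)\rangle$ already acts (simply) transitively on the $9$ mixed partitions, the pointwise constraints pin down $\sigma=(142)(365)$ on the representative, and $V(1)=\{1,2,4\}$, $V(5)=\{3,5,6\}$ give the rainbow $3\times 2$; centralizer conjugation preserves both $\pi$ and the disjoint-columns property. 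What your approach buys is insight: it classifies the near-counterexamples (up to symmetry, $\sigma\in\{\pi^{-1},(142)(365)\}$) and replaces ad hoc entry-chasing with counting and symmetry arguments that could plausibly generalize; what the paper's approach buys is elementarity and brevity of verification, needing nothing beyond swapping columns and colors. When you write this up, do spell out that the normalizations (relabeling symbols to make row $1$ the identity, and conjugating by centralizer elements while permuting columns accordingly) preserve the nonexistence of rainbow $2\times3$ and $3\times2$ subrectangles, since the whole argument leans on those reductions.
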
 
\begin{proof} 
We prove the equivalent statement that every $3\times 6$ latin rectangle contains a rainbow $2\times 3$ or $3\times 2$ subrectangle. To do so, suppose there exists some latin rectangle $R$ with no $2\times 3$ or $3\times 2$ subrectangle. We will refer to the symbols in $R$ as "colors" in order to maintain the correspondence between $R$ and a properly edge-colored $K_{3,6}$. We will let $[r_1,r_2:c_1,c_2,c_3]$ denote the $2\times 3$ subrectangle of $R$ that is the intersection of rows $r_1$ and $r_2$ and columns $c_1$, $c_2$, and $c_3$. Let us denote the color in the $i^{th}$ row and the $j^{th}$ column of $R$ by $R_{ij}$. Note that we may swap any two columns of $R$ without changing the fact that $R$ does not contain a rainbow $2\times 3$ or $3\times 2$ subrectangle. We will let $S(i,j)$ denote the operation of swapping columns $i$ and $j$ of $R$. Furthermore, at any time, we may exchange any two colors $r$ and $s$ so that all entries of $R$ colored $r$ are recolored $s$ and vice versa. Let $C(r,s)$ denote the operation of exchanging colors $r$ and $s$, and note that this operation does not change the fact that there is no rainbow $2\times 3$ or $3\times 2$ subrectangle of $R$. Even after swapping columns and exchanging colors of $R$, we will continue to refer to the rectangle as $R$. 
\par 
Now, call any coloring of $R$ with the property that $R_{1j}=j$ for all $j\in\{1,2,\ldots,6\}$ a ``primal" coloring. Without loss of generality, we may assume $R$ is primally colored. Consider the $2\times 3$ subrectangle [1,2:1,2,3] of $R$. Because this subrectangle is not rainbow, one or more of the following equalities must hold: 
\[R_{22}=1, R_{23}=1, R_{21}=2, R_{23}=2, R_{21}=3, R_{22}=3.\] 
Suppose $R_{23}=1$. Then, performing the operation $S(2,3)$ followed by $C(2,3)$, we reach a primal coloring in which $R_{22}=1$. Next, suppose that $R_{21}=2$. Performing the operation $S(1,2)$ followed by $C(1,2)$, we reach a primal coloring in which $R_{22}=1$. A similar argument shows that we may assume, without loss of generality, that $R$ is primally colored and $R_{22}=1$. Now, consider the $2\times 3$ subrectangle $[1,2:3,4,5]$. By the same argument as before, we see that, without loss of generality, we may assume that $R$ is primally colored, $R_{22}=1$, and $R_{24}=3$. This is because, in order to ensure that $R$ is primally colored with $R_{24}=3$, we only need to use some combination of some of the operations $S(3,4)$, $S(3,5)$, $S(4,5)$, $C(3,4)$, $C(3,5)$, and $C(4,5)$, none of which change the fact that $R_{22}=1$. Consider the subrectangle $[1,2:1,3,6]$ of $R$. Because this subrectangle is not rainbow, we require either $R_{21}=6$ or $R_{23}=6$. If $R_{23}=6$, perform the operations $S(1,3)$, $S(2,4)$, $C(1,3)$, and $C(2,4)$ to obtain a primal coloring of $R$ in which $R_{22}=1$, $R_{24}=3$, and $R_{21}=6$. If $R_{21}=6$, then we do not need to perform any operations to obtain such a coloring. If we now consider the subrectangle $[1,2:1,3,5]$, it is easy to see that we must have $R_{23}=5$. Considering $[1,2:2,3,6]$, we see that we must have $R_{25}=4$. 
\par 
If we now consider $[1,3:3,4,5]$, we see that we must have $R_{33}=4$, $R_{34}=5$, or $R_{35}=6$. No matter what, there must be some column $A$ of $R$ that contains the colors $3$, $4$, and $5$. Similarly, if we consider $[1,3:1,2,6]$, we see that we must have $R_{31}=2$, $R_{32}=6$, or $R_{36}=1$. No matter what, there must be some column $B$ of $R$ containing the colors $1$, $2$, and $6$. However, this is a contradiction because the union of $A$ and $B$ is a rainbow $3\times 2$ subrectangle of $R$.
\end{proof} 

Given $K_{a,b}$ we wish to find the ``smallest" complete bipartite graph $K_{m,n}$ for which $K_{m,n} \rightarrow_R K_{a,b}$.  Here we use the number of vertices $m+n$ as our ``smallest" metric.  To this end we define the following.

\begin{definition}
Let $K_{m,n}$ be the complete bipartite graph with the fewest vertices for which $K_{m,n} \rightarrow_R K_{a,b}$.  Then we define the  \emph{vertex anti-Ramsey number} $AR_{V} \left(K_{a,b}\right)$ to be the number of vertices $m+n$ of this minimal example.
\end{definition}

We begin the study of these numbers by finding some specific values.

\begin{theorem} \label{ThmK23} 
For any integer $b\geq 2$, \[AR_V(K_{2,b})=3b.\]
\end{theorem}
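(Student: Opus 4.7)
The plan is to establish matching upper and lower bounds on $AR_V(K_{2,b})$. The upper bound $AR_V(K_{2,b}) \leq 3b$ is immediate from Theorem \ref{Thm2.1} with $a = 2$: the hypothesis $n > (a^2 - a + 1)(b-1) = 3(b-1)$ is satisfied by $n = 3b - 2$, so $K_{2, 3b-2} \rightarrow_R K_{2,b}$, and this is a graph on $3b$ vertices.

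For the lower bound I must show that $K_{m,n} \not\rightarrow_R K_{2,b}$ whenever $m \leq n$ and $m + n \leq 3b - 1$, which I would do by splitting on the value of $m$. If $m = 1$, then $K_{m,n}$ contains no $K_{2,b}$ subgraph at all, so the arrow relation fails vacuously. If $2 \leq m \leq b - 1$, then the hypothesis $m+n \leq 3b-1$ forces $n \leq 3b - 3 = (2^2 - 2 + 1)(b-1)$; since $a - 1 = 1$ is (by the paper's convention) a prime power, Theorem \ref{Thm2.2} applies directly and gives $K_{m,n} \not\rightarrow_R K_{2,b}$.

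The main obstacle is the remaining range $m \geq b$, which neither Theorem \ref{Thm2.1} nor Theorem \ref{Thm2.2} addresses: the former requires $n > 3(b-1)$ and the latter requires $m < b$. Here $b \leq m \leq n$ together with $m + n \leq 3b - 1$ forces $n \leq 2b - 1$, so in particular $b \leq m \leq n \leq 2b-1$. I would handle this case with an explicit cyclic construction: define the $m \times n$ rectangle $L$ with entries in $\mathbb{Z}_{2b-1}$ by $L_{ij} \equiv i + j \pmod{2b-1}$. Because $m, n \leq 2b - 1$, the entries within any single row or column of $L$ are distinct, so $L$ is a genuine latin rectangle.

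It remains to verify that $L$ has no rainbow $2 \times b$ or $b \times 2$ subrectangle. In any $2 \times b$ subrectangle, each of the two rows contributes $b$ distinct symbols drawn from $\mathbb{Z}_{2b-1}$, a universe of size $2b - 1$, so the pigeonhole principle forces a repeated symbol among the $2b$ entries and prevents rainbowness. The identical pigeonhole argument applies to any $b \times 2$ subrectangle, which is the case of rainbow $K_{2,b}$ subgraphs with the two part-roles reversed. Combining the three cases establishes $AR_V(K_{2,b}) \geq 3b$, completing the proof.
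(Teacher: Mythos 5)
Your proof is correct and follows essentially the same route as the paper: Theorem \ref{Thm2.1} for the upper bound, Theorem \ref{Thm2.2} for the range $m<b$, and in the range $m\geq b$ (where $n\leq 2b-1$) a coloring with fewer than $2b$ colors so that a $K_{2,b}$, having $2b$ edges, cannot be rainbow. The only cosmetic difference is that you exhibit an explicit cyclic latin rectangle over $\mathbb{Z}_{2b-1}$, whereas the paper simply invokes the existence of a proper edge-coloring of $K_{m,n}$ with $n\leq 2b-1$ colors.
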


\begin{proof}
By Theorem \ref{Thm2.1}, $K_{2,3b-2}\rightarrow_R K_{2,b}$, so $AR_V(K_{2,b})\leq 3b$. Now, let $m$ and $n$ be positive integers with $m+n<3b$. We wish to show that $K_{m,n}\not\rightarrow_R K_{2,b}$. We may assume $2\leq m\leq n$. If $m<b$, then we may use Theorem \ref{Thm2.2} (with $a=2$) to deduce that $K_{m,n}\not\rightarrow_R K_{2,b}$. Therefore, let us assume $m\geq b$. Since $m+n<3b$, $n<2b$. Since $m\leq n$, it is possible to properly edge-color $K_{m,n}$ with $n$ colors. Such a coloring must necessarily forbid the existence of a rainbow $K_{2,b}$ subgraph because any $K_{2,b}$ subgraph has $2b$ edges. 
\end{proof}

\begin{definition}
Let $K_{m,n}$ be a complete bipartite graph with the fewest edges for which $K_{m,n} \rightarrow_R K_{a,b}$.  Then we define the  \emph{edge anti-Ramsey number} $AR_{E} \left(K_{a,b}\right)$ to be the number of edges $mn$ of this minimal example.
\end{definition}

Note that this generalizes the notion of size anti-Ramsey numbers given in \cite{size}.  From Theorem \ref{ThmK23} we know that $AR_{E} \left(K_{2,3}\right) \leq 18$.  We initiate the study of these size anti-Ramsey numbers with the following theorem. 

\begin{theorem} 
If $a$ and $b$ are integers such that $a-1$ is a prime power and $b\geq a(a-1)$, then \[AR_E(K_{a,b})=a^2(a-1)(b-1)+ab.\]
\end{theorem}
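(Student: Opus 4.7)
The plan is to prove matching upper and lower bounds, using the factorization
\[a^2(a-1)(b-1)+ab \;=\; a\bigl[(a^2-a+1)(b-1)+1\bigr].\]

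For the upper bound, I would apply Theorem \ref{Thm2.1} to $K_{a,n}$ with $n=(a^2-a+1)(b-1)+1$. Here $m=a\geq a$ and $n>(a^2-a+1)(b-1)$, so Theorem \ref{Thm2.1} gives $K_{a,n}\rightarrow_R K_{a,b}$; the edge count $an$ equals the target value. Thus $AR_E(K_{a,b})\le a^2(a-1)(b-1)+ab$.

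For the lower bound, I would show that every pair $(m,n)$ with $m\le n$ and $mn<a^2(a-1)(b-1)+ab$ satisfies $K_{m,n}\not\rightarrow_R K_{a,b}$, splitting into three cases according to the size of $m$. If $m<a$, then since $a\le b$ neither side of $K_{m,n}$ can accommodate a part of $K_{a,b}$, so $K_{m,n}$ contains no $K_{a,b}$ subgraph at all and the statement is vacuous. If $a\le m<b$, then from $mn<a\bigl[(a^2-a+1)(b-1)+1\bigr]$ and $m\ge a$ I get $n<(a^2-a+1)(b-1)+1$, hence $n\le (a^2-a+1)(b-1)$, and since $a-1$ is a prime power Theorem \ref{Thm2.2} applies and yields $K_{m,n}\not\rightarrow_R K_{a,b}$.

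The interesting case is $m\ge b$, and this is where the hypothesis $b\ge a(a-1)$ is needed. Here I would bound $n\le mn/m\le mn/b<[a^2(a-1)(b-1)+ab]/b=a^2(a-1)(b-1)/b+a$, and then observe that $b\ge a(a-1)$ implies $a^2(a-1)(b-1)/b\le a(b-1)$, so $n<ab$. Since $K_{m,n}$ is bipartite with $m\le n$, K\"onig's edge-coloring theorem gives a proper edge coloring using exactly $\max(m,n)=n$ colors; as every copy of $K_{a,b}$ in $K_{m,n}$ has $ab>n$ edges, no such copy can be rainbow under this coloring, so $K_{m,n}\not\rightarrow_R K_{a,b}$.

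The main obstacle is this third case: it is the only step that uses $b\ge a(a-1)$ essentially, and the rest of the proof is essentially bookkeeping on top of Theorems \ref{Thm2.1} and \ref{Thm2.2}. One should double-check the boundary $b=a(a-1)$ carefully, but there the inequality $mn<a^2(a-1)(b-1)+ab$ is strict, which is enough to force $n<ab$ strictly.
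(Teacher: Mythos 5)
Your proof is correct and uses essentially the same ingredients as the paper: Theorem \ref{Thm2.1} for the upper bound, and for the lower bound Theorem \ref{Thm2.2} together with the observation that a proper edge-coloring of $K_{m,n}$ with only $n<ab$ colors cannot contain a rainbow $K_{a,b}$, with the hypothesis $b\geq a(a-1)$ entering in exactly the same way. The only difference is bookkeeping: you split the case analysis on $m$ (below $a$, between $a$ and $b$, at least $b$), whereas the paper splits on whether $n<ab$ or $n\geq ab$ and then deduces $m<b$ in the latter case, so the two arguments are essentially identical.
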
 
\begin{proof}
Suppose $a$ and $b$ are integers with $a-1$ a prime power and $b\geq a(a-1)$. By Theorem \ref{Thm2.1}, $K_{a,(a^2-a+1)(b-1)+1}\rightarrow_R K_{a,b}$, so \[AR_E(K_{2,b})\leq a((a^2-a+1)(b-1)+1)=a^2(a-1)(b-1)+ab.\] Now, let $m$ and $n$ be positive integers with $mn<a^2(a-1)(b-1)+ab$. We wish to show that $K_{m,n}\not\rightarrow_R K_{a,b}$. We may assume $a\leq m\leq n$. Since $m\leq n$, it is possible to properly edge-color $K_{m,n}$ with $n$ colors. If $n<ab$, then such a coloring must necessarily forbid the existence of a rainbow $K_{a,b}$ subgraph because any $K_{a,b}$ subgraph has $ab$ edges. Therefore, let us assume that $n\geq ab$. Since $mn<a^2(a-1)(b-1)+ab$ and $a(a-1)\leq b$, \[m<\frac{a^2(a-1)(b-1)+ab}{n}\leq\frac{ab(b-1)+ab}{n}\leq b.\] Hence, we may use Theorem \ref{Thm2.2} to conclude that $K_{m,n}\not\rightarrow_R K_{a,b}$.  
\end{proof}  
\section{Acknowledgments}
The authors would like to thank Professor Peter Johnson for posing a problem that burgeoned into this paper. 
\bigskip
\hrule

\noindent 2010 {\it Mathematics Subject Classification}:  Primary 05C15; Secondary 05B15.

\noindent \emph{Keywords: } Graph coloring; bipartite; rainbow; edge-coloring; latin square.

\end{document}